\documentclass[11pt]{article}

\usepackage[T1]{fontenc}
\usepackage{lmodern}
\usepackage{microtype}
\usepackage[a4paper,margin=25mm]{geometry}
\usepackage{amsmath,amssymb,amsthm,mathtools}

\newtheorem{theorem}{Theorem}[section]
\newtheorem{proposition}[theorem]{Proposition}
\newtheorem{lemma}[theorem]{Lemma}
\newtheorem{corollary}[theorem]{Corollary}

\theoremstyle{remark}
\newtheorem{remark}[theorem]{Remark}

\newcommand{\spen}{s^{+}}
\newcommand{\sneg}{s^{-}}
\newcommand{\tr}{\operatorname{tr}}
\newcommand{\Fnorm}[1]{\left\lVert #1\right\rVert_{\mathrm F}}

\title{Positive and Negative Square Energies of $2$-Connected Graphs}
\author{S. Akbari$^a$, Fu-Tao Hu$^b$\thanks{E-mail address: hufu@ahu.edu.cn}, Ya-Yang Liu$^b$ \\
{\small $^a$Department of Mathematics, Sharif University of Technology, Tehran, Iran}\\
{\small $^b$Center for Pure Mathematics, School of Mathematical Sciences, Anhui University, Hefei, P.R. China}
}
\date{}

\begin{document}

\maketitle

\begin{abstract}
Let $G$ be a graph of order $n$, and let $s^+(G)$ and $s^-(G)$ denote the sums of the squares of the positive and negative adjacency eigenvalues of $G$, respectively. Recently, Liu, Tang, and Zhang proved the conjecture of Elphick, Farber, Goldberg, and Wocjan that every connected graph $G$ of order $n$ satisfies
$
\min\{s^+(G), s^-(G)\} \ge n-1.
$
For positive square energy, we strengthen this result by showing that
every $2$-connected graph $G$ of order $n$ which is not a cycle
satisfies $s^+(G)\ge n$.  The formally analogous assertion for
$s^-$ is false: the complete graph $K_n$ satisfies
$s^-(K_n)=n-1$.  We prove a natural counterpart in the triangle-free
class: every triangle-free $2$-connected noncycle $G$
satisfies
$
 \min\{s^+(G),s^-(G)\}>n.
$
More generally, it is enough that some maximum-degree vertex of $G$
belongs to no triangle.
Together with the exact square energies of cycles, this characterizes
the triangle-free $2$-connected graphs for which $s^-(G)\ge n$;
the only exceptions are the cycles $C_{4k+3}$ with $k\geq1$.
\end{abstract}

\medskip
\noindent\textbf{Keywords.}
Positive square energy; negative square energy; $2$-connected graph;
triangle-free graph; Hamiltonian graph.

\medskip
\noindent\textbf{2020 Mathematics Subject Classification.}
05C50, 05C40, 15A42.

\section{Introduction}

All graphs in this paper are finite, simple, and undirected.  Let
$G$ be a graph of order $n$ and size $m$.  If
$
 \lambda_1(G)\geq\cdots\geq\lambda_n(G)
$
are the adjacency eigenvalues of $G$, define
$$
 \spen(G)=\sum_{\lambda_i(G)>0}\lambda_i^2(G),
 \qquad
 \sneg(G)=\sum_{\lambda_i(G)<0}\lambda_i^2(G).
$$
Since
$
 \spen(G)+\sneg(G)=\tr(A^2(G))=2m,
$
these quantities split the second adjacency spectral moment according
to the signs of the eigenvalues. Elphick, Farber, Goldberg, and Wocjan \cite{EFGW} conjectured that
every connected graph $G$ of order $n$ satisfies
$
 \min\{\spen(G),\sneg(G)\}\geq n-1.
$
This conjecture was recently proved by Liu, Tang, and Zhang
\cite{LTZ}.  Before that resolution, Akbari, Kumar, Mohar, Pragada,
and Zhang \cite{AKMPSZ} proposed the following strengthening for
positive square energy:
\begin{equation}\label{eq:akbari-conjecture}
 \text{if $G$ is connected and $m\geq n+1$, then }\spen(G)\geq n.
\end{equation}
They verified \eqref{eq:akbari-conjecture} for several graph classes,
including claw-free graphs and graphs of diameter two.
We prove \eqref{eq:akbari-conjecture} for $2$-connected graphs.
More precisely, every $2$-connected noncycle of order $n$ satisfies
$\spen(G)\geq n$.  This also implies the same conclusion for every
Hamiltonian noncycle.

It is essential that this statement concerns positive square energy.
There is no unrestricted negative counterpart: for every $r\geq 2$,
the eigenvalues of $K_r$ are $r-1$ once and $-1$ with multiplicity
$r-1$.  Hence
\[
 \sneg(K_r)=r-1<r.
\]
Thus the Liu--Tang--Zhang lower bound $n-1$ is already best possible
for negative square energy even among $2$-connected non-cycle graphs.
Our negative result identifies a natural class in which the stronger
bound does hold.  We prove that every triangle-free $2$-connected
non-cycle graph satisfies $\sneg(G)>n$.  Together with the cycle formulas,
this gives an exact classification within the triangle-free
$2$-connected class.

\section{Preliminaries}\label{sec:preliminaries}

We begin with the spectral decomposition of the adjacency matrix,
the Liu--Tang--Zhang Theorem, and the variational descriptions of
the positive and negative spectral parts.  Let $G$ be a graph of
order $n$, with adjacency matrix $A=A(G)$ and eigenvalues
$\lambda_1,\ldots,\lambda_n$, counted with multiplicity.  Define
$$
 \spen(G)=\sum_{\lambda_i>0}\lambda_i^2,
 \qquad
 \sneg(G)=\sum_{\lambda_i<0}\lambda_i^2.
$$
Write
$$
 A=P-N,
 \qquad P=A_+\succeq0,
 \qquad N=A_-\succeq0,
 \qquad PN=0,
$$
where $P$ and $N$ are the positive and negative spectral parts of $A$.
Then
$$
 \spen(G)=\tr(P^2)=\Fnorm{P}^{2},
 \qquad
 \sneg(G)=\tr(N^2)=\Fnorm{N}^{2}.
$$

The following interesting result was recently  proved in \cite{LTZ}.

\begin{theorem}[Liu--Tang--Zhang] 
\label{thm:connected}
If $F$ is a connected graph, then
$$
 \min\{\spen(F),\sneg(F)\}\geq |V(F)|-1.
$$
In particular, both square energies are at least $|V(F)|-1$.
\end{theorem}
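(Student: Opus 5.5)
In this paper the statement is quoted from \cite{LTZ} and used as a black box, so no new proof is needed. If I had to prove it myself, I would use the variational descriptions of $P$ and $N$. For any positive semidefinite matrix $X\neq0$ we have $\langle A,X\rangle=\langle P,X\rangle-\langle N,X\rangle\le\langle P,X\rangle\le\Fnorm{P}\,\Fnorm{X}$. Hence
\[
 \spen(F)\ \ge\ \frac{\langle A,X\rangle_+^{2}}{\Fnorm{X}^{2}},
 \qquad
 \sneg(F)\ \ge\ \frac{\langle -A,X\rangle_+^{2}}{\Fnorm{X}^{2}},
\]
where $t_+=\max\{t,0\}$. Both bounds are attained, by $X=P$ and $X=N$ respectively. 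So it suffices to exhibit, for each connected $F$ on $n$ vertices, positive semidefinite test matrices $X^{\pm}$ with $\langle \pm A,X^{\pm}\rangle^2\ge (n-1)\Fnorm{X^{\pm}}^2$.

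A first reduction is monotonicity under induced subgraphs. If $B=A[S]$ is a principal submatrix, then $\Fnorm{B_+}\le\Fnorm{P}$: indeed $B_+$ is the optimal test matrix for $B$, and padding it by zeros gives a positive semidefinite test matrix for $A$ with the same inner product and norm. The same holds for $N$. This localizes the problem but does not by itself give $n-1$, since $F$ need not contain a large induced tree. So I would instead build the test matrices along a spanning tree $T$ of $F$, rooted at some vertex.

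For $\sneg$ I would take $X^-=\sum_{uv\in E(T)} w_{uv}(e_u-e_v)(e_u-e_v)^{T}$. This gives $\langle -A,X^-\rangle=2\sum w_{uv}$ minus contributions from non-tree edges inside stars of $T$. For $\spen$ the analogous choice uses $(e_u+e_v)$. The weights $w_{uv}$ would be chosen adaptively, for instance decaying with the degree in $T$, in order to balance $\Fnorm{X}^2$, which picks up cross terms at high-degree tree vertices. Non-tree edges of $F$ are the real difficulty: they can reduce $\langle \mp A,X\rangle$. To handle them I would allow signed vectors, for $\sneg$ choosing signs from a bipartition-like assignment along $T$, and argue that their total effect can be controlled by induction on $n$. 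In that induction I would delete a leaf of $T$ and compare the optimal matrices $P$, $N$ before and after, using the rank-two update of $A$ given by the leaf's row.

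The hard step will be this inductive comparison. I need to show that adding a vertex of any degree to a connected graph increases $\Fnorm{N}^2$ (and $\Fnorm{P}^2$) by at least $1$. Adding a pendant vertex is straightforward: the new edge alone supplies $1$ via a $2\times2$ test block. For a vertex with many neighbours, however, the gain can be absorbed, as the complete graph $K_n$ shows (there the gain is exactly $1$). Here I would try a test matrix of the form $N_{\mathrm{old}}+\varepsilon\, yy^{T}$, where $y$ is supported on the new vertex and its neighbours, and optimize over $\varepsilon$. I expect this to be the genuine obstacle and the place where the original argument of \cite{LTZ} is needed.
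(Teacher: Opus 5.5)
Your opening remark matches the paper: Theorem~\ref{thm:connected} is imported from \cite{LTZ} as a black box and no proof is given there. The self-contained sketch you add, however, is not a proof, and its explicit test-matrix step cannot work, because trees are equality cases. A tree $T$ is bipartite with $2m=2n-2$, so $\spen(T)=\sneg(T)=n-1$. Equality in $\langle -A,X\rangle\le\langle N,X\rangle\le\Fnorm{N}\,\Fnorm{X}$ forces $X$ to be a positive multiple of $N$. So for $F=T$ your criterion can be met only by $X^-=cN(T)$, and that matrix is not supported on the diagonal and the tree edges: for $P_3$ its entry at the two leaves is $\sqrt2/4$. Concretely, for $P_3$ with weights $w_1,w_2$ your choice gives
\[
 \frac{\langle -A,X^-\rangle^2}{\Fnorm{X^-}^2}
 =\frac{4(w_1+w_2)^2}{4w_1^2+4w_2^2+2w_1w_2}\le\frac85<2=\sneg(P_3),
\]
and $X^+$ built from $e_u+e_v$ gives the same numbers. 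Non-tree edges never enter $\langle -A,X^-\rangle$, which equals $2\sum w_{uv}$ exactly. The entire loss is in $\Fnorm{X^-}$, and choosing signs does not change that norm.

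Your inductive step, as you state it, is false for $P$, even for pendant vertices. The paw (a triangle with a pendant edge) has eigenvalues $-1$ and the roots of $x^3-x^2-3x+1$, approximately $2.170$, $0.311$, $-1.481$. Hence $\spen(\text{paw})\approx4.806<5=\spen(K_3)+1$. Your $2\times2$ block argument breaks because $(e_u\pm e_v)(e_u\pm e_v)^{\mathsf T}$ overlaps the old optimizer at the attachment vertex $u$. That optimizer has a positive diagonal entry at $u$, so the squared Frobenius norm picks up a positive cross term and the two contributions do not add. The pendant vertex of the paw is a leaf of every spanning tree, so an induction that deletes an arbitrary leaf cannot guarantee a drop of at least $1$. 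You would have to choose the deleted vertex carefully or carry a different invariant. Supplying that argument is exactly the content of \cite{LTZ}, and your outline leaves it open.
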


We shall use the following variational description; the second
identity follows from the first one by replacing $B$ with $-B$.

\begin{lemma}\label{lem:variational}
For every real symmetric matrix $B$,
\[
 \Fnorm{B_+}^{2}
 =\min_{M\succeq0}\Fnorm{B+M}^{2},
 \qquad
 \Fnorm{B_-}^{2}
 =\min_{M\succeq0}\Fnorm{B-M}^{2}.
\]
\end{lemma}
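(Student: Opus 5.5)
The plan is to diagonalize $B$ orthogonally and then reduce to an elementary inequality between nonnegative matrices. Write $B=\sum_i \mu_i u_iu_i^{T}$ with an orthonormal eigenbasis $u_1,\dots,u_n$. Then $B_+=\sum_{\mu_i>0}\mu_i u_iu_i^{T}$ and $B_-=\sum_{\mu_i<0}(-\mu_i)u_iu_i^{T}$, so $B=B_+-B_-$, $B_+B_-=0$, and both parts are positive semidefinite.

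For the upper bound, take $M=B_-\succeq0$. Then $B+M=B_+$, so the minimum is at most $\Fnorm{B_+}^2$.

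For the lower bound, fix any $M\succeq0$ and expand:
\[
\Fnorm{B+M}^2=\Fnorm{B_+ -(B_- - M)}^2
=\Fnorm{B_+}^2-2\tr\bigl(B_+(B_--M)\bigr)+\Fnorm{B_--M}^2 .
\]
Since $B_+B_-=0$, the cross term is $2\tr(B_+M)$. This is nonnegative, because the trace of a product of two positive semidefinite matrices is nonnegative: $\tr(B_+M)=\tr(B_+^{1/2}MB_+^{1/2})\ge0$. The last term is a squared norm, so it is also nonnegative. Hence $\Fnorm{B+M}^2\ge\Fnorm{B_+}^2$ for every $M\succeq0$, and the minimum equals $\Fnorm{B_+}^2$. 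It is attained at $M=B_-$.

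For the second identity, apply the first to $-B$. We have $(-B)_+=B_-$, and $\Fnorm{-B+M}=\Fnorm{B-M}$, which gives $\Fnorm{B_-}^2=\min_{M\succeq0}\Fnorm{B-M}^2$.

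The only point that needs care is the sign of $\tr(B_+M)$, that is, the trace inequality for positive semidefinite matrices. It follows at once from the square-root factorization above, so I expect no real obstacle.
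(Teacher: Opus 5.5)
Your proof is correct and complete. The paper does not prove the positive identity at all: it quotes it from \cite{ZhangExtremal}. It obtains the negative identity by the same substitution $B\mapsto -B$ that you use, so your argument supplies a self-contained proof of the part the paper only cites. Your expansion in fact gives the exact identity
\[
 \Fnorm{B+M}^{2}=\Fnorm{B_{+}}^{2}+2\tr(B_{+}M)+\Fnorm{B_{-}-M}^{2}\qquad(M\succeq0),
\]
which also shows that $M=B_{-}$ is the unique minimizer. The paper uses the lemma only in Lemma~\ref{lem:deletion}, with $M=N[U]$ (respectively $M=P[U]$). There it needs only the inequality $\Fnorm{B_{+}}^{2}\le\Fnorm{B+M}^{2}$ (respectively $\Fnorm{B_{-}}^{2}\le\Fnorm{B-M}^{2}$). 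That inequality is exactly your lower-bound step; the attainment at $M=B_{-}$ plays no role there.
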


The positive identity in Lemma~\ref{lem:variational} appears in
\cite{ZhangExtremal}.  We shall also use the following superadditivity
result of Akbari, Kumar, Mohar, and Pragada \cite{AkbariLinear}.

\begin{lemma}
\label{lem:superadd}
If $H_1,\ldots,H_k$ are pairwise vertex-disjoint induced subgraphs of
a graph $G$, then
$$
 \spen(G)\geq\sum_{i=1}^k\spen(H_i),
 \qquad
 \sneg(G)\geq\sum_{i=1}^k\sneg(H_i).
$$
\end{lemma}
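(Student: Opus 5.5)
The plan is to derive the lemma directly from the variational description in Lemma~\ref{lem:variational}. The key facts are that a principal submatrix of a positive semidefinite matrix is again positive semidefinite, and that the Frobenius norm only decreases when we discard off-diagonal blocks.

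Let $A=A(G)$, and write $V_i=V(H_i)$ for $i=1,\ldots,k$. Order the vertices of $G$ so that $V_1,\ldots,V_k$ come first, as consecutive blocks, followed by the remaining vertices $R=V(G)\setminus\bigcup_i V_i$. By Lemma~\ref{lem:variational} there is some $M\succeq0$ with $\spen(G)=\Fnorm{A+M}^2$. The minimum is attained, for instance at $M=A_-$, since $A+A_-=A_+$.

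Because the Frobenius norm squared is the sum of the squares of all entries, we drop every entry outside the diagonal blocks $V_i\times V_i$ to get
\[
 \Fnorm{A+M}^2\;\geq\;\sum_{i=1}^k \Fnorm{(A+M)[V_i]}^2 .
\]
Here $X[V_i]$ denotes the principal submatrix indexed by $V_i$. Since $H_i$ is an \emph{induced} subgraph, $A[V_i]=A(H_i)$. Moreover $M[V_i]\succeq0$, because for any vector $x$ supported on $V_i$ we have $x^{\top}M[V_i]x=x^{\top}Mx\ge0$. Hence $(A+M)[V_i]=A(H_i)+M_i$ with $M_i\succeq0$, and applying Lemma~\ref{lem:variational} to $B=A(H_i)$ gives
\[
 \Fnorm{A(H_i)+M_i}^2\;\geq\;\min_{M'\succeq0}\Fnorm{A(H_i)+M'}^2=\spen(H_i).
\]
Summing over $i$ yields $\spen(G)\ge\sum_i\spen(H_i)$.

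The negative inequality is proved the same way, using the second identity of Lemma~\ref{lem:variational}. Take $M\succeq0$ with $\sneg(G)=\Fnorm{A-M}^2$, for example $M=A_+$. Then restrict to the diagonal blocks, note that $(A-M)[V_i]=A(H_i)-M[V_i]$ with $M[V_i]\succeq0$, and apply the minimization for each $H_i$.

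The argument is short, so there is no real obstacle. The only points to check are that the induced hypothesis is used exactly once, to identify $A[V_i]$ with $A(H_i)$, and that the psd constraint survives restriction to principal submatrices.
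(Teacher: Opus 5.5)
Your proof is correct. The paper does not prove this lemma itself. It imports it from Akbari, Kumar, Mohar, and Pragada, and only remarks that the negative half follows from the same proof or from a matrix form applied to $-A(G)$.

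Your derivation from Lemma~\ref{lem:variational} is the same mechanism the paper uses to prove Lemma~\ref{lem:deletion}. You take $M=A_-$ (resp.\ $M=A_+$) and restrict to the diagonal blocks $V_i\times V_i$. There you use that principal submatrices of a positive semidefinite matrix stay positive semidefinite, and that $A[V_i]=A(H_i)$ by inducedness. Then you discard the off-block entries. Lemma~\ref{lem:deletion} is your case $k=1$, $V_1=V(G)\setminus\{v\}$, except that the discarded row and column of $P$ (resp.\ $N$) are kept as $\mu_v$ instead of being dropped.

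What your route buys is self-containedness: the preliminaries then rest on Lemma~\ref{lem:variational} alone. Your argument also never uses that the entries of $A$ are $0$ or $1$. It therefore proves $\Fnorm{A_\pm}^{2}\geq\sum_i\Fnorm{(A[V_i])_\pm}^{2}$ for every real symmetric $A$, which is the ``matrix form'' the paper invokes when passing to $-A(G)$. You sidestep that step anyway by using the second variational identity directly.
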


The negative inequality follows either from the same proof or by
applying its matrix form to $-A(G)$.  For a symmetric matrix $Q$
indexed by $V(G)$ and $v\in V(G)$, define
\[
 \mu_v(Q)=Q_{vv}^2+2\sum_{u\neq v}Q_{vu}^2.
\]
This is precisely the part of $\Fnorm{Q}^2$ supported on the row and
column indexed by $v$.

\begin{lemma}\label{lem:deletion}
For every $v\in V(G)$,
\[
 \spen(G)\geq\spen(G-v)+\mu_v(P),\quad \sneg(G)\geq\sneg(G-v)+\mu_v(N).
\]
\end{lemma}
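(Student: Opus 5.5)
We prove the positive inequality; the negative one follows by applying the same argument to $-A$, with the roles of $P$ and $N$ exchanged. The tool is the variational identity of Lemma~\ref{lem:variational}. We build a positive semidefinite test matrix for the adjacency matrix $A'$ of $G-v$ out of $P$, and then compare Frobenius norms.

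Order the vertices so that $v$ comes last, and write
\[
 P=\begin{pmatrix} P_{11} & p\\ p^{\top} & P_{vv}\end{pmatrix},\qquad
 N=\begin{pmatrix} N_{11} & q\\ q^{\top} & N_{vv}\end{pmatrix}.
\]
Since $A=P-N$, the principal block satisfies $A'=P_{11}-N_{11}$. A principal submatrix of a positive semidefinite matrix is positive semidefinite, so $N_{11}\succeq0$. Lemma~\ref{lem:variational}, applied with $B=A'$ and $M=N_{11}$, then gives
\[
 \spen(G-v)=\Fnorm{A'_+}^2\le \Fnorm{A'+N_{11}}^2=\Fnorm{P_{11}}^2 .
\]

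Next we split the Frobenius norm of $P$ according to the row and column of $v$:
\[
 \Fnorm{P}^2=\Fnorm{P_{11}}^2+2\|p\|^2+P_{vv}^2=\Fnorm{P_{11}}^2+\mu_v(P).
\]
Combining the two displays yields
\[
 \spen(G)=\Fnorm{P}^2\ge \spen(G-v)+\mu_v(P),
\]
which is the claimed inequality.

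For the negative part, run the same computation with $-A$. Its positive part is $N$ and its negative part is $P$, so we use $P_{11}\succeq0$ as the test matrix in the second identity of Lemma~\ref{lem:variational}:
\[
 \sneg(G-v)\le \Fnorm{A'-P_{11}}^2=\Fnorm{N_{11}}^2 .
\]
The same splitting of $\Fnorm{N}^2$ then gives $\sneg(G)\ge\sneg(G-v)+\mu_v(N)$.

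The only point needing care is choosing the right test matrix: $N_{11}$ for the positive bound and $P_{11}$ for the negative one. Each choice makes $A'$ plus or minus the test matrix collapse exactly to the corresponding block of the other spectral part.
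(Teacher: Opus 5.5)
Your proposal is correct and is essentially the paper's own proof. You use $N[U]$ as the positive semidefinite test matrix in Lemma~\ref{lem:variational} to get $\spen(G-v)\le\Fnorm{P[U]}^2$, and $P[U]$ symmetrically for the negative part. You then split off the row and column of $v$ in $\Fnorm{P}^2$ (respectively $\Fnorm{N}^2$) to obtain $\mu_v$.
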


\begin{proof}
Put $U=V(G)\setminus\{v\}$.  Since
$$
 A(G-v)+N[U]=P[U]
 \qquad\text{and}\qquad N[U]\succeq0,
$$
Lemma~\ref{lem:variational} gives
$$
 \spen(G-v)\leq\Fnorm{P[U]}^{2}.
$$
Moreover,
$$
 \Fnorm{P}^{2}
 =\Fnorm{P[U]}^{2}+P_{vv}^2+2\sum_{u\neq v}P_{vu}^2.
$$
Combining the last two displays proves the result.

For the negative part, observe that
\[
 A(G-v)-P[U]=-N[U],
 \qquad P[U]\succeq0.
\]
The second identity in Lemma~\ref{lem:variational} therefore gives
\[
 \sneg(G-v)\leq\Fnorm{N[U]}^2.
\]
Finally,
\[
 \Fnorm{N}^2
 =\Fnorm{N[U]}^2+N_{vv}^2+2\sum_{u\neq v}N_{vu}^2,
\]
which proves the negative inequality.
\end{proof}

\section{Main Results}

We first prove the positive result for all $2$-connected graphs.
The triangle-free local estimate is invariant under reflection of the
spectrum about zero, and hence yields strict lower bounds for both
square energies.  We then combine these bounds with the exact cycle
formulas.

\begin{lemma}\label{lem:dense}
Suppose that $G$ contains a subgraph $H$ with vertex set $S$, where
$|S|=r$ and $|E(H)|=h$.  Then there exists $v\in S$ such that
$$
 \mu_v(P)\geq \frac{8h^2}{r^2(r+1)},
$$
where $H$ need not be induced.
\end{lemma}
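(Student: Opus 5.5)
The plan is to test the positive part $P$ against the all-ones vector on $S$, turn the resulting linear lower bound into a quadratic one with Cauchy--Schwarz, and then average over $S$. Let $\mathbf 1_S$ be the indicator vector of $S$. Since $A=P-N$ and $N\succeq0$,
\[
 \mathbf 1_S^{\top}P\,\mathbf 1_S=\mathbf 1_S^{\top}A\,\mathbf 1_S+\mathbf 1_S^{\top}N\,\mathbf 1_S\geq \mathbf 1_S^{\top}A\,\mathbf 1_S=2|E(G[S])|\geq 2h .
\]
This is the only place where $H$ enters, and it is why $H$ need not be induced: we only use $|E(G[S])|\ge h$.

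Next I would split the left-hand side into a diagonal and an off-diagonal part:
\[
 D=\sum_{v\in S}P_{vv},\qquad O=\sum_{\substack{u,w\in S\\ u\neq w}}P_{uw},
\]
where the second sum runs over ordered pairs, so that $D+O\geq 2h$. Discarding from each $\mu_v(P)$ the terms with $u\notin S$ gives
\[
 \sum_{v\in S}\mu_v(P)\geq \sum_{v\in S}P_{vv}^2+2\sum_{\substack{u,w\in S\\ u\neq w}}P_{uw}^2 .
\]
Cauchy--Schwarz then yields $\sum_{v\in S}P_{vv}^2\geq D^2/r$ and $\sum_{u\neq w}P_{uw}^2\geq O^2/(r(r-1))$, the latter over $r(r-1)$ ordered pairs. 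For $r=1$ we have $h=0$, so that case is trivial. Hence
\[
 \sum_{v\in S}\mu_v(P)\geq \frac{D^2}{r}+\frac{2O^2}{r(r-1)} .
\]

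The remaining step, and the only one needing care, is to minimize $D^2/r+2O^2/(r(r-1))$ subject to $D+O\geq 2h$. Here $D\geq0$ because $P\succeq0$, while $O$ could a priori have either sign. Write $D+O=\sqrt r\cdot(D/\sqrt r)+\sqrt{r(r-1)/2}\cdot\bigl(O\sqrt{2/(r(r-1))}\bigr)$ and apply Cauchy--Schwarz once more:
\[
 (2h)^2\leq (D+O)^2\leq \Bigl(r+\tfrac{r(r-1)}{2}\Bigr)\Bigl(\frac{D^2}{r}+\frac{2O^2}{r(r-1)}\Bigr)=\frac{r(r+1)}{2}\Bigl(\frac{D^2}{r}+\frac{2O^2}{r(r-1)}\Bigr).
\]
The first inequality needs $D+O\geq 2h\geq 0$, which we have. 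It follows that $\sum_{v\in S}\mu_v(P)\geq 8h^2/(r(r+1))$.

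Finally, since $|S|=r$, some $v\in S$ has $\mu_v(P)$ at least the average, which is at least $8h^2/(r^2(r+1))$, as claimed.
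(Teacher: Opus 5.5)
Your proof is correct and is essentially the paper's argument: you test $P$ against the all-ones vector on $S$ to get $\mathbf 1^{\mathsf T}P[S]\mathbf 1\geq 2h$, then use Cauchy--Schwarz and average over $S$. The only difference is cosmetic: the paper applies Cauchy--Schwarz once to the $r(r+1)/2$ numbers $P_{vv}$ ($v\in S$) and $2P_{uv}$ ($\{u,v\}\subseteq S$), whose sum of squares is exactly your $\sum_{v\in S}P_{vv}^2+2\sum_{u\neq w}P_{uw}^2$, whereas you use two Cauchy--Schwarz steps that combine to the same bound.
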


\begin{proof}
Let $X=P[S]$ and let $J=G[S]$.  Since $P-A=N\succeq0$, taking the
principal submatrix indexed by $S$ gives
$$
 X-A(J)=N[S]\succeq0.
$$
For the all-ones vector $\mathbf1\in\mathbb R^r$, it follows that
\begin{equation}\label{eq:sum-lower}
 \mathbf1^{\mathsf T}X\mathbf1
 \geq \mathbf1^{\mathsf T}A(J)\mathbf1
 =2|E(J)|
 \geq2h.
\end{equation}

For $v\in S$, put
$$
 \tau_v=P_{vv}^{2}
       +2\sum_{u\in S\setminus\{v\}}P_{vu}^{2}.
$$
Consider the following $r+\binom r2=r(r+1)/2$ real numbers:
$$
 P_{vv}\quad(v\in S),
 \qquad
 2P_{uv}\quad(\{u,v\}\in\tbinom S2).
$$
Their sum is $\mathbf1^{\mathsf T}X\mathbf1$, while their squared sum is
$$
 \sum_{v\in S}P_{vv}^{2}
 +4\sum_{\{u,v\}\in\binom S2}P_{uv}^{2}
 =\sum_{v\in S}\tau_v.
$$
Therefore, Cauchy--Schwarz and \eqref{eq:sum-lower} imply
$$
 4h^2
 \leq\bigl(\mathbf1^{\mathsf T}X\mathbf1\bigr)^2
 \leq\frac{r(r+1)}{2}\sum_{v\in S}\tau_v.
$$
Hence
$
 \sum_{v\in S}\tau_v\geq\frac{8h^2}{r(r+1)}
$
and so there exists $v\in S$ such that
$
 \tau_v\geq\frac{8h^2}{r^2(r+1)}.
$
Finally,
$$
 \mu_v(P)
 =\tau_v+2\sum_{u\notin S}P_{vu}^{2}
 \geq\tau_v,
$$
which proves the lemma.
\end{proof}

\begin{corollary}\label{cor:triangle}
If $G$ contains a triangle $T$, then there is a vertex $v\in V(T)$
such that
$
 \mu_v(P)\geq2.
$
\end{corollary}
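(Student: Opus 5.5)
This is a direct application of Lemma~\ref{lem:dense}. Let $S=V(T)$ be the vertex set of the triangle and let $H=T$, viewed as a (not necessarily induced) subgraph of $G$. Then $r=|S|=3$ and $h=|E(H)|=3$. Lemma~\ref{lem:dense} yields a vertex $v\in S=V(T)$ with
\[
 \mu_v(P)\geq \frac{8h^2}{r^2(r+1)}=\frac{8\cdot 9}{9\cdot 4}=2,
\]
which is exactly the claimed bound.

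Nothing here is an obstacle; the only thing to check is that Lemma~\ref{lem:dense} allows $H$ to be non-induced. It does, and in any case the triangle is automatically the induced subgraph $G[S]$, since a graph on three vertices with three edges is complete.

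For context, the value $2$ is sharp for this method. In the proof of Lemma~\ref{lem:dense}, the Cauchy--Schwarz step over $r(r+1)/2=6$ numbers with total sum at least $2h=6$ gives $\sum_{v\in S}\tau_v\geq 6$. Averaging over the three vertices of $T$ then gives $\tau_v\geq2$ for some $v$, and $\mu_v(P)\geq\tau_v$.
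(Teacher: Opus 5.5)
Your proposal is correct and is exactly the paper's intended argument: the paper states the corollary without further proof as an immediate consequence of Lemma~\ref{lem:dense}, applied with $S=V(T)$ and $r=h=3$, which gives $8\cdot 9/(9\cdot 4)=2$. Your check that the triangle is automatically induced, and your recomputation of the Cauchy--Schwarz step ($\sum_{v\in S}\tau_v\geq 6$, hence some $\tau_v\geq 2$), are both accurate.
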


The following lemma is the main estimate.  It is written entirely in
terms of finite weighted sums of adjacency eigenvalues.

\begin{lemma}\label{lem:local}
Let $G$ be a graph, let $d=\Delta(G)\geq3$, and let $d_G(v)=d$.
If $v$ belongs to no triangle, then
\[
 \mu_v(P)>2
 \qquad\text{and}\qquad
 \mu_v(N)>2.
\]
\end{lemma}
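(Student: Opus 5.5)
The plan is to reduce both inequalities to a small moment problem for the spectral measure of $A$ at the vertex $v$. Let $A=\sum_i\lambda_i x_ix_i^{\mathsf T}$ with orthonormal eigenvectors, and put $w_i=(x_i)_v^2\ge0$, so that $\sum_i w_i=1$.

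\emph{Step 1: rewrite $\mu_v$.} Since $\mu_v(Q)=2(Q^2)_{vv}-Q_{vv}^2$, define
\[
 a=P_{vv}=\sum_{\lambda_i>0}\lambda_i w_i,\qquad b=(P^2)_{vv}=\sum_{\lambda_i>0}\lambda_i^2 w_i,
\]
and let $a',b'$ be the analogous sums over $\lambda_i<0$ with $|\lambda_i|$ in place of $\lambda_i$, so $a'=N_{vv}$ and $b'=(N^2)_{vv}$. Then
\[
 \mu_v(P)=2b-a^2,\qquad \mu_v(N)=2b'-a'^2 .
\]

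\emph{Step 2: moment constraints from the local graph structure.} The closed-walk counts at $v$ give:
\begin{itemize}
\item $\sum_i\lambda_iw_i=A_{vv}=0$, so $a=a'$;
\item $\sum_i\lambda_i^2w_i=d$, so $b+b'=d$;
\item $\sum_i\lambda_i^3w_i=2t(v)=0$ because $v$ lies in no triangle, so the positive and negative third moments agree; call the common value $c$;
\item $\sum_i\lambda_i^4w_i=\|A^2e_v\|^2\ge d^2$.
\end{itemize}
In addition $|\lambda_i|\le\lambda_1\le\Delta=d$, and $W_++W_-\le1$, where $W_\pm$ is the total weight on positive, respectively negative, eigenvalues.

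This data is invariant under $\lambda\mapsto-\lambda$. Hence any lower bound derived for $2b-a^2$ from these constraints applies verbatim to $2b'-a'^2$. It therefore suffices to prove $\mu_v(P)>2$.

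\emph{Step 3: Cauchy--Schwarz relations.} From the definitions we get
\[
 a^2\le bW_+,\qquad a^2\le b'W_-,\qquad b^2\le ac,\qquad b'^2\le ac .
\]
The spectral radius bound gives $c\le d\min\{b,b'\}$. The first inequality already yields $\mu_v(P)\ge b(2-W_+)\ge b$. So it remains to rule out the regime where $b$ is small.

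If $b$ is small, then $b'=d-b$ is close to $d$. Then $b'^2\le a c\le a\,d\,b\le d\,b^{3/2}$ forces $b$ to be large, a contradiction. I would also bring in the fourth-moment inequality. Since $\lambda^4\le d\lambda^2$ is not sharp here, I would instead use $\sum\lambda^4 w\ge d^2=(\sum\lambda^2w)^2$ together with the vanishing odd moments to show the measure cannot concentrate on one side.

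The star $K_{1,d}$ should be the benchmark. There the measure is $\tfrac12\delta_{\sqrt d}+\tfrac12\delta_{-\sqrt d}$, giving $b=d/2$, $a=\sqrt d/2$ and $\mu_v(P)=3d/4\ge 9/4>2$. I expect the extremal configurations to be two-point measures symmetric under the constraints.

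\emph{Main obstacle.} The hardest part is the optimisation in Step 3. I must show that the minimum of $2b-a^2$ over all measures satisfying the constraints in Step 2 exceeds $2$ for every $d\ge3$, and that it is strict. The crude bounds above give only $b\ge(d/4)^{2/3}$, which is too weak for $d=3$.

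My first attempt is to eliminate $a$ and $c$ to get a one-variable inequality in $b$, using both $a^2\le\min\{bW_+,\,b'W_-\}$ with $W_++W_-\le1$ and $b'^2\le a\,d\,b$. If that is still too weak, I would reduce by convexity to measures supported on at most two points on each side. I would then check the finitely many resulting cases directly, using $\lambda_1\le d$ and the fourth-moment bound to exclude measures concentrated near $\pm d$ or near $0$. Strictness should follow because equality in the Cauchy--Schwarz steps would force a measure incompatible with $b+b'=d\ge3$.
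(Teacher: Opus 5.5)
Your Steps 1--2 and the Cauchy--Schwarz relations of Step 3 match the paper's setup. However, the proposal stops where the real work begins, and the route you try first cannot finish the proof at $d=3$.

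Your bound $\mu_v(P)\ge b(2-W_+)\ge b$ throws information away. Adding $a^2/b\le W_+$ and $a^2/b'\le W_-$ and using $W_++W_-\le1$ gives $a^2\le bb'/d$, hence $\mu_v(P)\ge b+b^2/d$. Combined with $b'^2\le adb$ this gives $b'\le d^{1/3}b$, so $b\ge d/(1+d^{1/3})$, which settles $d\ge4$; this is the paper's argument.

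For $d=3$, no elimination of $a$ and $c$ from your list of inequalities can succeed. The values $b=\tfrac{5}{4}$, $b'=\tfrac{7}{4}$, $W_+=\tfrac{7}{12}$, $W_-=\tfrac{5}{12}$, $a^2=\tfrac{35}{48}$, $c=\tfrac{15}{4}$ satisfy every inequality in Steps 2--3, including $b+b'=3$, yet $2b-a^2=\tfrac{85}{48}<2$. The fourth-moment condition adds nothing, because $\sum_i\lambda_i^4w_i\ge(\sum_i\lambda_i^2w_i)^2$ holds for every probability vector $(w_i)$ and so excludes no configuration.

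The missing idea is to use the four moment identities and the bound $|\lambda_i|\le3$ jointly, through a polynomial (LP-dual) certificate, rather than through aggregate Cauchy--Schwarz. The paper takes $\psi(t)=\frac{121}{1029}\bigl(t+\frac{9}{4}\bigr)^2\bigl(t-\frac{27}{121}\bigr)$ and checks that $\psi(t)\le(t_+)^2$ on $[-3,3]$. The four identities then give $b\ge\sum_iw_i\psi(\lambda_i)=\frac{7551}{5488}$. This just exceeds $(\sqrt{33}-3)/2\approx1.3723$, the value beyond which $b+b^2/3>2$. The resulting explicit slack also gives strictness, which your equality-case remark does not establish.

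Your fallback, minimizing the concave functional $2b-a^2$ over extreme points (measures with at most four atoms), is sound in principle. But it leaves a continuous optimization over atom positions and weights, not finitely many cases, and you have not carried it out.

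Your guess about the extremal configuration is also wrong. The Petersen graph puts weights $\tfrac{1}{10},\tfrac12,\tfrac25$ on the eigenvalues $3,1,-2$, giving $\mu_v(P)=2.8-0.64=2.16<\tfrac94$. So the symmetric two-point (star) measure is not the minimizer.
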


\begin{proof}
We prove the assertion for $P$.  The entire argument uses only the
four local spectral moments in \eqref{eq:weighted-identities} below
and the bound $|\lambda_i|\leq d$.  These data are invariant under
$A\mapsto-A$, while that replacement interchanges $P$ and $N$.
Consequently, the same proof will also establish the assertion for
$N$.

Choose an orthonormal eigenbasis $z_1,\ldots,z_n$ of $A$, where
$Az_i=\lambda_i z_i$, and define
$w_i=z_i^2(v)$ for $i=1,\ldots,n$.
Thus $w_i\geq0$.  By the spectral decomposition we find that:
$$
 A^k=\sum_{i=1}^n\lambda_i^k z_i z_i^{\mathsf T}.
$$
We have
\begin{equation}\label{eq:weighted-identities}
 \sum_{i=1}^n w_i=1,
 \qquad
 \sum_{i=1}^n w_i\lambda_i=0,
 \qquad
 \sum_{i=1}^n w_i\lambda_i^2=d,
 \qquad
 \sum_{i=1}^n w_i\lambda_i^3=0.
\end{equation}
Indeed, the four sums in \eqref{eq:weighted-identities} are respectively
$1,A_{vv},(A^2)_{vv},(A^3)_{vv}$.  Here $A_{vv}=0$,
$(A^2)_{vv}=d_G(v)=d$, and $(A^3)_{vv}$ is twice the number of
triangles containing $v$, so it is zero.  Also,
$|\lambda_i|\leq\Delta(G)=d$ for every $i$.
Let
$$
 I_+=\{i:\lambda_i>0\},
 \qquad
 I_-=\{i:\lambda_i<0\}.
$$
Indices corresponding to zero eigenvalues belong to neither set, but
their weights remain present in $\sum_{i=1}^n w_i=1$.
Define the following  sums:
\begin{align*}
 a&=\sum_{i\in I_+}w_i\lambda_i
     =\sum_{i\in I_-}w_i(-\lambda_i),\\
 c&=\sum_{i\in I_+}w_i\lambda_i^2,\\
 e&=\sum_{i\in I_-}w_i\lambda_i^2=d-c,\\
 h&=\sum_{i\in I_+}w_i\lambda_i^3
     =\sum_{i\in I_-}w_i(-\lambda_i)^3.
\end{align*}
 Since
$
 P=\sum_{i\in I_+}\lambda_i z_i z_i^{\mathsf T}$ and 
$ N=\sum_{i\in I_-}(-\lambda_i)z_i z_i^{\mathsf T},
$
we have $P_{vv}=a$, $(P^2)_{vv}=c$, and $(N^2)_{vv}=e$.  Moreover, since $P$
is symmetric,
$$
 \sum_{u\in V(G)}P_{vu}^2=(P^2)_{vv}=c.
$$
Consequently,
\begin{equation}\label{eq:mass-c}
 \mu_v(P)=2(P^2)_{vv}-P_{vv}^2=2c-a^2.
\end{equation}

Let
$
 \pi_+=\sum_{i\in I_+}w_i$
 and
 $\pi_-=\sum_{i\in I_-}w_i.
$
Both $c$ and $e$ are positive.  Indeed, if either one were zero, the
first-power identity in \eqref{eq:weighted-identities} would force the
other one to be zero as well, contradicting
$\sum_iw_i\lambda_i^2=d>0$.  Applying Cauchy--Schwarz separately to
the two  sums defining $a$ gives
$$
 \begin{aligned}
 a^2
 &=\left(\sum_{i\in I_+}w_i\lambda_i\right)^2
 \leq
 \left(\sum_{i\in I_+}w_i\right)
 \left(\sum_{i\in I_+}w_i\lambda_i^2\right)
 =\pi_+c,\\
 a^2
 &=\left(\sum_{i\in I_-}w_i(-\lambda_i)\right)^2
 \leq
 \left(\sum_{i\in I_-}w_i\right)
 \left(\sum_{i\in I_-}w_i\lambda_i^2\right)
 =\pi_-e.
 \end{aligned}
$$
Consequently,
$$
 a^2\left(\frac1c+\frac1e\right)
 \leq\pi_++\pi_-\leq1,
$$
and hence
\begin{equation}\label{eq:a-bound}
 a^2\leq\frac{ce}{c+e}=\frac{ce}{d}.
\end{equation}
By \eqref{eq:mass-c} and \eqref{eq:a-bound},
\begin{equation}\label{eq:basic-mass}
 \mu_v(P)
 \geq2c-\frac{c(d-c)}d
 =c+\frac{c^2}{d}.
\end{equation}

We first suppose that $d\geq4$.  A further application of
Cauchy--Schwarz to the negative-eigenvalue sum gives
$$
 \begin{aligned}
 e^2
 &=\left(\sum_{i\in I_-}w_i(-\lambda_i)^2\right)^2\\
 &\leq
 \left(\sum_{i\in I_-}w_i(-\lambda_i)\right)
 \left(\sum_{i\in I_-}w_i(-\lambda_i)^3\right)
 =ah.
 \end{aligned}
$$
For every $i\in I_+$, $0<\lambda_i\leq d$, and hence
$$
 h=\sum_{i\in I_+}w_i\lambda_i^3
 \leq d\sum_{i\in I_+}w_i\lambda_i^2=dc.
$$
Using \eqref{eq:a-bound}, we obtain
$
 e^2
 \leq adc
 \leq dc\sqrt{\frac{ce}{d}}
 =\sqrt d\,c^{3/2}e^{1/2}.
$
It follows that
$
 e\leq d^{1/3}c$ and $
 c\geq\frac{d}{1+d^{1/3}}.
$
Because $c+c^2/d$ is increasing for $c\geq0$, inequality
\eqref{eq:basic-mass} yields
\begin{equation}\label{eq:dge4}
 \mu_v(P)
 \geq
 \frac{d(d^{1/3}+2)}{(d^{1/3}+1)^2}.
\end{equation}
Writing $x=d^{1/3}$, the right-hand side of \eqref{eq:dge4} becomes
$
 F(x)=\frac{x^3(x+2)}{(x+1)^2}.
$
Its derivative is
$$
 F'(x)=\frac{2x^2(x^2+3x+3)}{(x+1)^3}>0.
$$
At $d=4$, we have $x^2=\sqrt[3]{16}<3$, and therefore
$
 F(x)=\frac{4(x+2)}{(x+1)^2}>2.
$
Thus \eqref{eq:dge4} is strictly greater than $2$ for every $d\geq4$.
It remains to consider $d=3$.  For a real number $t$, write
$t_+=\max\{t,0\}$.  Define the cubic polynomial
$$
 \psi(t)=
 \frac{121}{1029}
 \left(t+\frac94\right)^2
 \left(t-\frac{27}{121}\right).
$$
For $-3\leq t\leq0$, we have $\psi(t)\leq0=(t_+)^2$.  For
$0\leq t\leq3$, the exact factorization
$$
 t^2-\psi(t)
 =\frac{121}{1029}
  \left(t-\frac{27}{44}\right)^2(3-t)
 \geq0
$$
shows that $\psi(t)\leq(t_+)^2$.  Hence, because
$|\lambda_i|\leq3$,
$
 c=\sum_{i=1}^n w_i(\lambda_i)_+^2
 \geq\sum_{i=1}^n w_i\psi(\lambda_i).
$
Expanding $\psi$ gives
$$
 \psi(t)
 =-\frac{729}{5488}
  +\frac{2619}{5488}t
  +\frac{345}{686}t^2
  +\frac{121}{1029}t^3.
$$
Substituting this expansion and the four finite-sum identities in
\eqref{eq:weighted-identities}, with $d=3$, we conclude that
$$
 \begin{aligned}
 c
 &\geq\sum_{i=1}^n w_i\psi(\lambda_i)\\
 &=-\frac{729}{5488}\sum_{i=1}^n w_i
   +\frac{2619}{5488}\sum_{i=1}^n w_i\lambda_i
   +\frac{345}{686}\sum_{i=1}^n w_i\lambda_i^2
   +\frac{121}{1029}\sum_{i=1}^n w_i\lambda_i^3\\
 &=-\frac{729}{5488}+3\left(\frac{345}{686}\right)
 =\frac{7551}{5488}.
 \end{aligned}
$$
Finally, \eqref{eq:basic-mass} gives
$$
 \mu_v(P)
 \geq
 \frac{7551}{5488}
 +\frac13\left(\frac{7551}{5488}\right)^2
 =\frac{60445755}{30118144}
 =2+\frac{209467}{30118144}
 >2.
$$
This proves the assertion for $P$.  Replacing every $\lambda_i$ by
$-\lambda_i$ leaves \eqref{eq:weighted-identities} and the spectral
bound unchanged and interchanges the two spectral parts.  It therefore
gives the identical estimate for $\mu_v(N)$.
\end{proof}

\begin{theorem}\label{thm:main}
Let $G$ be a $2$-connected graph of order $n$.  If
$\Delta(G)\geq3$, then $\spen(G)\geq n$.
If, in addition, some maximum-degree vertex of $G$ belongs to no
triangle, then
\[
 \min\{\spen(G),\sneg(G)\}>n.
\]
In particular, the strict inequality holds, when $G$ is triangle-free.
\end{theorem}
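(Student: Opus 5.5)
The argument runs through the vertex-deletion inequality. Pick a vertex $v$ with $d_G(v)=\Delta(G)\geq3$, chosen appropriately below. Since $G$ is $2$-connected, $G-v$ is connected on $n-1$ vertices. Theorem~\ref{thm:connected} then gives $\spen(G-v)\geq n-2$ and $\sneg(G-v)\geq n-2$. By Lemma~\ref{lem:deletion},
\[
 \spen(G)\geq n-2+\mu_v(P),\qquad \sneg(G)\geq n-2+\mu_v(N).
\]
So it suffices to produce a vertex $v$ whose deletion leaves $G$ connected and which satisfies $\mu_v(P)\geq2$, with strict inequality, and $\mu_v(N)>2$ in the triangle-free-vertex case. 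Every vertex qualifies for deletion, because $2$-connectivity makes every $G-v$ connected. The whole problem therefore reduces to finding a vertex with large local mass.

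For the second assertion, let $v$ be a maximum-degree vertex that lies in no triangle, with $d=\Delta(G)\geq3$. Lemma~\ref{lem:local} gives $\mu_v(P)>2$ and $\mu_v(N)>2$. Substituting into the display yields $\min\{\spen(G),\sneg(G)\}>n$. If $G$ is triangle-free, every vertex is in no triangle, so in particular any maximum-degree vertex works; this is the final sentence of the statement.

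For the first assertion, split into two cases.

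\emph{Case 1: some maximum-degree vertex lies in no triangle.} The previous paragraph already gives $\spen(G)>n$.

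\emph{Case 2: $G$ contains a triangle.} This holds in particular whenever every maximum-degree vertex lies in one. Corollary~\ref{cor:triangle} then provides a vertex $v$ of that triangle with $\mu_v(P)\geq2$. This $v$ need not have maximum degree, but nothing about its degree is needed, since $G-v$ is connected by $2$-connectivity. Hence $\spen(G)\geq n-2+2=n$.

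The two cases are exhaustive, which proves $\spen(G)\geq n$ whenever $\Delta(G)\geq3$.

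There is no hard step left; the substance lies in Lemma~\ref{lem:local} and Corollary~\ref{cor:triangle}. The point needing care is that Lemma~\ref{lem:deletion} requires the same vertex $v$ in both the deletion and the mass bound, and that $2$-connectivity is exactly what makes any such $v$ admissible. One should also note that $\Delta(G)\geq3$ is what excludes cycles, and it is the hypothesis Lemma~\ref{lem:local} requires.
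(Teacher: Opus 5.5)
Your proposal is correct and is essentially the paper's proof. You delete a vertex $v$, use $2$-connectivity and Theorem~\ref{thm:connected} to get $\spen(G-v),\sneg(G-v)\geq n-2$, take $\mu_v(P)\geq2$ from Corollary~\ref{cor:triangle} when $G$ contains a triangle, and take $\mu_v(P),\mu_v(N)>2$ from Lemma~\ref{lem:local} at a maximum-degree vertex lying in no triangle. Your case split (some maximum-degree vertex in no triangle versus $G$ containing a triangle) differs only cosmetically from the paper's (triangle versus triangle-free); both splits are exhaustive.
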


\begin{proof}
Suppose first that $G$ contains a triangle.  By
Corollary~\ref{cor:triangle}, there exists a vertex $v$ such that
$\mu_v(P)\geq2$.
Since $G$ is $2$-connected, $G-v$ is connected.
Theorem~\ref{thm:connected} and Lemma~\ref{lem:deletion} give
$$
 \spen(G)
 \geq\spen(G-v)+\mu_v(P)
 \geq(n-2)+2=n.
$$

It remains to handle the triangle-free case of the first assertion,
this will follow from the stronger conclusion.  Thus let $v$ be a
maximum-degree vertex which
belongs to no triangle.  Lemma~\ref{lem:local} gives
$\mu_v(P)>2$ and $\mu_v(N)>2$.  Again, $G-v$ is connected, so
\[
 \spen(G)
 \geq\spen(G-v)+\mu_v(P)
 >(n-2)+2=n.
\]
The negative deletion inequality gives, in exactly the same way,
\[
 \sneg(G)
 \geq\sneg(G-v)+\mu_v(N)
 >(n-2)+2=n.
\]
If $G$ is triangle-free, every maximum-degree vertex satisfies the
additional hypothesis, so the proof is complete.
\end{proof}

\noindent The following result was proved in \cite{AKMPSZ}.

\begin{theorem}\label{prop:cycles}

For $n\geq3$, we have, 
$$
 \begin{array}{c|c|c}
  \text{ }n & \spen(C_n) & \sneg(C_n)\\ \hline
  n\text{ even}
   & n & n\\[2mm]
  n\equiv3\pmod4
   & n-1+\sec(\pi/n) & n+1-\sec(\pi/n)\\[2mm]
  n\equiv1\pmod4
   & n+1-\sec(\pi/n) & n-1+\sec(\pi/n)
 \end{array}
$$
In particular,
$$
 \spen(C_n)\geq n
 \text{ if and only if }
 n\not\equiv1\pmod4,
 \qquad
 \sneg(C_n)\geq n
 \text{ if and only if }
 n\not\equiv3\pmod4.
$$
\end{theorem}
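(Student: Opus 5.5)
We compute the spectrum of $C_n$ directly.  The eigenvalues are $2\cos(2\pi j/n)$ for $j=0,\dots,n-1$, and $\sum_j 4\cos^2(2\pi j/n)=2n$, so $\spen(C_n)+\sneg(C_n)=2n$.  It therefore suffices to compute the difference
\[
 D=\spen(C_n)-\sneg(C_n)=\sum_j 4\cos^2(2\pi j/n)\operatorname{sgn}\cos(2\pi j/n).
\]
Writing $4\cos^2\theta=2+2\cos2\theta$, we get $D=2\sum_j\operatorname{sgn}(\cos\theta_j)+2\sum_j\operatorname{sgn}(\cos\theta_j)\cos2\theta_j$ with $\theta_j=2\pi j/n$.

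For $n$ even, the substitution $j\mapsto j+n/2$ negates every eigenvalue.  Hence the spectrum is symmetric and $D=0$, so both energies equal $n$.

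For $n$ odd there are no zero eigenvalues, since $\cos\theta_j=0$ would require $4j/n$ to be an odd integer.  We count the indices with $\cos\theta_j>0$, that is, $|j|<n/4$ for representatives $j\in(-n/2,n/2)$.  When $n=4k+1$ these are $|j|\le k$, giving $2k+1$ positive eigenvalues and $2k$ negative ones.  When $n=4k+3$ they are again $|j|\le k$, giving $2k+1$ positive and $2k+2$ negative eigenvalues.  The sign-count term is thus $+2$ or $-2$ respectively.

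For the second term we evaluate the partial sum $\sum_{|j|\le k}\cos(4\pi j/n)$ by the Dirichlet kernel:
\[
 \sum_{|j|\le k}\cos(jx)=\frac{\sin((k+\tfrac12)x)}{\sin(x/2)},\qquad x=\frac{4\pi}{n}.
\]
Since the full sum $\sum_j\cos(4\pi j/n)$ vanishes ($n$ odd, so $2$ is invertible mod $n$), the negative-index part is the negative of the positive part.  Hence the second term equals $4\sin((2k+1)\cdot 2\pi/n)/\sin(2\pi/n)$.

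For $n=4k+1$ we have $(2k+1)2\pi/n=\pi+\pi/n$.  The ratio is then $-\sin(\pi/n)/\sin(2\pi/n)=-1/(2\cos(\pi/n))$, and so $D=2-2\sec(\pi/n)$.  This gives $\spen=n+1-\sec(\pi/n)$ and $\sneg=n-1+\sec(\pi/n)$.

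For $n=4k+3$ we have $(2k+1)2\pi/n=\pi-\pi/n$.  The ratio is now $+1/(2\cos(\pi/n))$, and $D=-2+2\sec(\pi/n)$, giving $\spen=n-1+\sec(\pi/n)$ and $\sneg=n+1-\sec(\pi/n)$.

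The "in particular" statements follow because $\sec(\pi/n)>1$ for $n\ge3$.  Consequently $n+1-\sec(\pi/n)<n<n-1+\sec(\pi/n)$, which gives exactly the two "if and only if" statements.

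The main obstacle is the sign bookkeeping in the Dirichlet-kernel step: identifying $(2k+1)2\pi/n$ relative to $\pi$ correctly, and checking that the complementary index set contributes the opposite sign.  As a safeguard we would verify the formulas on $C_3$ and $C_5$.  For $C_3$ the spectrum is $2,-1,-1$, giving $\spen=4=2+\sec(\pi/3)$ and $\sneg=2$.  For $C_5$ the formula gives $\spen=6-\sec(\pi/5)\approx4.76$.
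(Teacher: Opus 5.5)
Your proof is correct. It differs from the paper only in that the paper gives no derivation at all: it imports the table from Akbari, Kumar, Mohar, Pragada, and Zhang \cite{AKMPSZ}.

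Your argument works as follows. You use $\spen+\sneg=2n$ to reduce everything to $D=\spen-\sneg$. The split $4\cos^2\theta=2+2\cos2\theta$ turns $D$ into a sign count plus $4\sum_{|j|\le k}\cos(4\pi j/n)$. That partial sum is then evaluated by the Dirichlet kernel.

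The sign bookkeeping you flagged as the main risk checks out:
\begin{itemize}
\item $(4k+2)\pi/n$ equals $\pi+\pi/n$ when $n=4k+1$, and $\pi-\pi/n$ when $n=4k+3$.
\item The identity $\sin(2\pi/n)=2\sin(\pi/n)\cos(\pi/n)$ then produces the $\sec(\pi/n)$ term.
\item Together these give $D=\pm\bigl(2-2\sec(\pi/n)\bigr)$ with the correct sign in each residue class.
\end{itemize}
The two facts you use in passing, $\sin(2\pi/n)\neq0$ and $\sum_j\cos(4\pi j/n)=0$, both hold for every $n\geq3$. Your symmetry argument for even $n$ also correctly handles the zero eigenvalues that occur when $4\mid n$.

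What each approach buys: the citation keeps the paper short. Your computation makes the section self-contained and shows exactly where $\sec(\pi/n)$ comes from.

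You can also close your $C_5$ sanity check exactly rather than numerically. The spectrum $2$, $\tfrac{\sqrt5-1}{2}$ (twice), $-\tfrac{\sqrt5+1}{2}$ (twice) gives $\spen(C_5)=4+2\cdot\tfrac{3-\sqrt5}{2}=7-\sqrt5$. Since $\sec(\pi/5)=\sqrt5-1$, the formula $6-\sec(\pi/5)$ also equals $7-\sqrt5$.
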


\vspace{0.3cm}
\begin{theorem}
\label{thm:classification}
Let $G$ be a $2$-connected graph of order $n$.  Then
$
 \spen(G)\geq n
 \text{ if and only if }
 G\not\cong C_{4k+1}\ \text{for every }k\geq1.
$
\end{theorem}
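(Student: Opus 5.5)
The plan is to split according to the maximum degree of $G$. We first note that a $2$-connected graph has minimum degree at least $2$. Moreover, if $\Delta(G)\le 2$, then $G$ is $2$-regular and connected, so $G$ is a cycle $C_n$ with $n\ge3$. Hence there are exactly two cases: either $G\cong C_n$, or $\Delta(G)\geq3$.

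\emph{Case $\Delta(G)\geq3$.} Theorem~\ref{thm:main} gives $\spen(G)\geq n$ directly. We must also check that no graph in this case is of the form $C_{4k+1}$. This is immediate, since a cycle has maximum degree $2$. So every $2$-connected noncycle satisfies both sides of the equivalence.

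\emph{Case $G\cong C_n$.} Here we apply Theorem~\ref{prop:cycles}, which gives $\spen(C_n)\geq n$ if and only if $n\not\equiv1\pmod4$. The cycles with $n\equiv1\pmod 4$ and $n\ge3$ are exactly $C_{4k+1}$ with $k\geq1$, because $n=1$ is excluded. This matches the stated exceptional family. For completeness, the failure of the bound in that family can be verified from the table: $\spen(C_n)=n+1-\sec(\pi/n)<n$, since $\sec(\pi/n)>1$ for $n\ge 3$.

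Combining the two cases proves the equivalence. There is no real obstacle here. The only points needing care are the observation that $\Delta(G)\le2$ forces $G$ to be a cycle, and the correct indexing $k\ge1$ of the exceptional cycles.
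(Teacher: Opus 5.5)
Your proof is correct and matches the paper's argument: you split on $\Delta(G)$, using Theorem~\ref{thm:main} when $\Delta(G)\ge 3$ and the cycle table of Theorem~\ref{prop:cycles} when $G$ is a cycle. Your check that $\sec(\pi/n)>1$ gives $\spen(C_{4k+1})<4k+1$ is the same verification the paper uses for the converse direction.
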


\begin{proof}
Every $2$-connected graph has minimum degree at least two.  If
$\Delta(G)=2$, then $G$ is a cycle, and
Theorem~\ref{prop:cycles} gives the result.  If
$\Delta(G)\geq3$, Theorem~\ref{thm:main} gives
$\spen(G)\geq n$.  Conversely,
Theorem~\ref{prop:cycles} gives
$$
 \spen(C_{4k+1})
 =4k+2-\sec\left(\frac{\pi}{4k+1}\right)
 <4k+1.
$$
\end{proof}

\begin{proposition}\label{prop:negative-obstruction}
For every $n\geq4$,
\[
 \min\{\sneg(G):G\text{ is a $2$-connected non-cycle graph of order }n\}
 =n-1.
\]
In particular, the unrestricted negative analogue of
Theorem~\ref{thm:classification} is false.
\end{proposition}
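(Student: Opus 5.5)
Two things have to be shown for each $n\ge4$: the lower bound and a construction attaining it. The lower bound comes straight from earlier results. Every $2$-connected graph is connected, so Theorem~\ref{thm:connected} gives $\sneg(G)\ge n-1$ for every graph in the family, and the minimum is at least $n-1$. It then remains to exhibit, for each $n\ge4$, a $2$-connected non-cycle graph of order $n$ with $\sneg=n-1$.

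The natural witness is $K_n$. For $n\ge4$, $K_n$ is $3$-connected, hence $2$-connected. It is not a cycle, since every vertex has degree $n-1\ge3$. Its spectrum is $n-1$ once and $-1$ with multiplicity $n-1$, as recorded in the introduction. Hence $\sneg(K_n)=(n-1)\cdot1=n-1$, so the lower bound is attained and the minimum equals $n-1$.

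For the ``in particular'' clause, the negative analogue of Theorem~\ref{thm:classification} would assert that $\sneg(G)\ge n$ for every $2$-connected graph except certain cycles. Since $K_n$ is a non-cycle with $\sneg(K_n)=n-1<n$, any such statement fails; in particular the conclusion $\sneg(G)\ge n$ of Theorem~\ref{thm:main} cannot hold without the triangle-free hypothesis. There is no real obstacle here. The only points to check are that $n\ge4$ ensures $K_n$ is not a cycle (at $n=3$, $K_3=C_3$), and that the multiplicity count gives exactly $n-1$.
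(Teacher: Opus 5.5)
Your proposal is correct and matches the paper's proof: the lower bound $n-1$ comes from Theorem~\ref{thm:connected} because $2$-connected graphs are connected. For the matching upper bound, $K_n$ is a $2$-connected non-cycle for $n\ge4$, and its spectrum ($n-1$ once, $-1$ with multiplicity $n-1$) gives $\sneg(K_n)=n-1$, which also shows the negative analogue fails.
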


\begin{proof}
Theorem~\ref{thm:connected} gives $\sneg(G)\geq n-1$ for every
connected graph of order $n$.  On the other hand, $K_n$ is a
$2$-connected non-cycle graph for $n\geq4$.  Its eigenvalues are $n-1$ once
and $-1$ with multiplicity $n-1$.  Consequently,
$\sneg(K_n)=n-1$, proving both sharpness and the failure of an $n$
lower bound.
\end{proof}

\begin{remark}\label{rem:negative-obstruction}
Excluding complete graphs alone does not repair the assertion.  For
example,
\[
 \operatorname{Spec}(K_4-e)
 =
 \left\{\frac{1+\sqrt{17}}2,\frac{1-\sqrt{17}}2,-1,0\right\},
\]
and hence
\[
 \sneg(K_4-e)=\frac{11-\sqrt{17}}2<4.
\]
For $K_n$ one has $N=I-J/n$, and therefore
\[
 \mu_v(N)=1-\frac1{n^2}<2
\]
at every vertex.  Thus the triangle-density argument for $P$ cannot
be reflected to the negative spectral part.
\end{remark}

\begin{theorem}\label{thm:negative-classification}
Let $G$ be a triangle-free $2$-connected graph of order $n$.  Then
\[
 \sneg(G)\geq n
 \quad {\it if~and~only~if}\quad
 G\ncong C_{4k+3}\quad\text{for every }k\geq1.
\]
Moreover, equality holds if and only if $G$ is an even cycle, and
every non-cycle graph satisfies $\sneg(G)>n$.
\end{theorem}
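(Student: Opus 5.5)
The proof will be a case split on $\Delta(G)$, modelled on the proof of Theorem~\ref{thm:classification}. Let $G$ be triangle-free and $2$-connected of order $n$. Every $2$-connected graph has minimum degree at least two, so either $\Delta(G)=2$ and $G$ is a cycle, or $\Delta(G)\geq3$. A triangle-free cycle has length $n\geq4$.

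\textbf{Non-cycle case.} Suppose $\Delta(G)\geq3$. Since $G$ is triangle-free, every maximum-degree vertex lies in no triangle. The second assertion of Theorem~\ref{thm:main} therefore gives $\sneg(G)>n$. This proves the claim that every non-cycle satisfies the strict inequality. It also shows that no such $G$ can give equality or violate the bound.

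\textbf{Cycle case.} Suppose $G\cong C_n$ with $n\geq4$, and read off $\sneg(C_n)$ from Theorem~\ref{prop:cycles}.
\begin{itemize}
\item If $n$ is even, then $\sneg(C_n)=n$, so equality holds.
\item If $n\equiv1\pmod4$, then $n\geq5$ and $\sneg(C_n)=n-1+\sec(\pi/n)$. Since $0<\pi/n<\pi/2$, we have $\sec(\pi/n)>1$, so $\sneg(C_n)>n$ strictly.
\item If $n\equiv3\pmod4$, then $n=4k+3$ with $k\geq1$, because $n\geq4$ excludes the triangle. Here $\sneg(C_n)=n+1-\sec(\pi/n)<n$, again because $\sec(\pi/n)>1$.
\end{itemize}

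\textbf{Conclusion.} Combining the cases gives $\sneg(G)\geq n$ exactly when $G\not\cong C_{4k+3}$ for every $k\geq1$. Equality occurs precisely for even cycles. The inequality is strict for non-cycles and for the cycles $C_{4k+1}$.

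There is no real obstacle here; all the analytic work sits in Lemma~\ref{lem:local} and Theorem~\ref{thm:main}. The only points needing care are two. First, triangle-freeness excludes $C_3$, which is why the exceptions start at $k\geq1$. Second, $\sec(\pi/n)>1$ strictly for $n\geq4$, which separates equality from strict inequality in the odd cases.
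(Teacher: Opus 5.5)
Your proposal is correct and matches the paper's proof. Like the paper, you split on $\Delta(G)=2$ (the cycles, handled by Theorem~\ref{prop:cycles}, with $C_3$ excluded by triangle-freeness) versus $\Delta(G)\geq3$ (handled by the strict bound in Theorem~\ref{thm:main}). Your explicit use of $\sec(\pi/n)>1$ to separate equality (even cycles) from strict inequality ($C_{4k+1}$) just spells out a detail the paper leaves implicit.
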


\begin{proof}
Every $2$-connected graph has minimum degree at least 2.  If
$\Delta(G)=2$, then $G$ is a cycle, and the assertions follow from
Theorem~\ref{prop:cycles}.  If $\Delta(G)\geq3$, then
Theorem~\ref{thm:main} gives the strict inequality
$\sneg(G)>n$.  Since $C_3$ is not triangle-free, the exceptional
cycles in the present class are precisely $C_{4k+3}$ with $k\geq1$.
\end{proof}

\begin{corollary}\label{cor:hamiltonian}
Every Hamiltonian graph $G$ of order $n$ which is not a cycle satisfies
$
 \spen(G)\geq n.
$
If $G$ is also triangle-free, then
\[
 \min\{\spen(G),\sneg(G)\}>n.
\]
\end{corollary}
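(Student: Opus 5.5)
The plan is to reduce the corollary to Theorem~\ref{thm:main} via the elementary fact that a Hamiltonian graph is $2$-connected. Let $C$ be a Hamiltonian cycle of $G$, so $n\geq3$. If we delete any vertex $v$, the path $C-v$ still spans $V(G)\setminus\{v\}$. Hence $G-v$ is connected, and $G$ is $2$-connected.

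Next we use the hypothesis that $G$ is not a cycle. Every vertex has degree at least $2$ from $C$. Since $G\neq C$, there is an edge of $G$ outside $E(C)$; it is a chord of $C$, and its endpoints have degree at least $3$. Therefore $\Delta(G)\geq3$, and the first assertion of Theorem~\ref{thm:main} gives $\spen(G)\geq n$. Equivalently, Theorem~\ref{thm:classification} applies, since $G$ is $2$-connected and is not isomorphic to any $C_{4k+1}$.

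For the second assertion, assume in addition that $G$ is triangle-free. Then $G$ is a triangle-free $2$-connected graph with $\Delta(G)\geq3$. The final clause of Theorem~\ref{thm:main} yields
\[
 \min\{\spen(G),\sneg(G)\}>n.
\]

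No step is a genuine obstacle. The only point to state carefully is that ``not a cycle'' forces a chord of the Hamiltonian cycle, and hence $\Delta(G)\geq3$.
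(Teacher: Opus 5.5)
Your proof is correct and follows the same route the paper intends. The paper gives no separate proof, only the remark that the result covers every Hamiltonian noncycle: a Hamiltonian graph is $2$-connected, a chord of the Hamiltonian cycle forces $\Delta(G)\geq 3$, and Theorem~\ref{thm:main} then yields $\spen(G)\geq n$ and, in the triangle-free case, $\min\{\spen(G),\sneg(G)\}>n$.
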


\section{Declaration of AI Usage}

The authors used an artificial intelligence tool during the preparation of this paper. The tool was employed to improve the English language and to assist with certain computational tasks. All results, derivations, and conclusions were independently verified by the authors. The authors accept full responsibility for the correctness of the manuscript.

\end{document}